\documentclass[12pt,a4paper]{amsart}
\linespread{1.3}
\usepackage{graphicx}
\usepackage{color}

\hoffset=-1.5cm\leftmargin=1cm \voffset=1cm\topmargin -1cm \textwidth 16.5cm\textheight 24cm
\usepackage{amsmath, amssymb, amsthm, amsfonts}

\allowdisplaybreaks
\usepackage{amsmath,amssymb,amsthm,amsfonts,mathrsfs,enumerate,sansmath,mathtools,amscd
}
\usepackage[colorlinks=true]{hyperref}
\hypersetup{urlcolor=blue, citecolor=blue, draft=false}

\newtheorem{theorem}{Theorem}[section]
\newtheorem{proposition}[theorem]{Proposition}

\newtheorem{corollary}[theorem]{Corollary}
\theoremstyle{definition}
\newtheorem{definition}[theorem]{Definition}

\newtheorem{remark}[theorem]{Remark}

\begin{document}
\title[Carath\'eodory density of the Hurwitz metric]{Carath\'eodory density of the Hurwitz metric on plane domains}

\author[Arstu and S. K. Sahoo]{Arstu$^\dagger$ and Swadesh Kumar Sahoo$^\dagger$}

\address{$^\dagger$Discipline of Mathematics, Indian Institute of Technology Indore, Simrol, Khandwa Road, Indore 453 552, India}
\email{arstumothsra@gmail.com}
\email{swadesh.sahoo@iiti.ac.in}

\subjclass[2010]{Primary 30F45; Secondary 30C20, 30C80}

\keywords{hyperbolic density, Hurwitz density, Kobayashi density, Carath\'eodory density, conformal mapping, covering mapping}

\begin{abstract}
It is well-known that the Carath\'eodory metric is a natural generalization of the Poincar\'e metric, namely, the hyperbolic metric of the unit disk.
In 2016, the Hurwitz metric was introduced by D. Minda in arbitrary proper subdomains of the complex plane and he proved that this metric coincides with the hyperbolic metric when the domains are simply connected.
In this paper, we define a new metric which generalizes the Hurwitz metric in the sense of Carath\'eodory. Our main focus is to study its various basic properties
in connection with the Hurwitz metric.
\end{abstract}
\maketitle

	
\section{Introduction}\label{sec1}
Studying families of holomorphic functions associated with the hyperbolic metric always remains the hot topic in geometric function theory. Several researchers (for instance see \cite{Dav08,Min83,Min16}) have introduced new metrics which are closely related to the hyperbolic metric and established their comparison properties in possible situations. In particular,
the generalized Kobayashi metric is one such metric which is always greater than or equal to the hyperbolic metric (see \cite[Proposition~1]{Kee07}).
The generalized Kobayashi metric in a domain is defined by the smallest push forward of the hyperbolic metric 
from a hyperbolic domain to a plane domain by holomorphic functions. In \cite{Kee07}, it is further proved that the generalized Kobayashi metric agrees with the hyperbolic metric on simply connected domains (see also \cite{ke07}). 
Coincidence of the hyperbolic and the generalized Kobayashi densities on other plane domains are studied in
\cite{Tav09}.
Moreover,
the Kobayashi density satisfies the generalized Schwarz lemma for holomorphic function between two domains. 

In 2016, Minda introduced a new metric, namely, the Hurwitz metric that also exceeds the hyperbolic metric in hyperbolic domains (see \cite{Min16}) and investigated several basic properties such as distance decreasing property,
conformal invariance property, domain monotonicity property, bilipschitz equivalent properties with the hyperbolic and the quasihyperbolic metrics.
In our recent work (see \cite{AS1}), we studied a new
metric that generalizes the Hurwitz metric in the sense of Kobayashi. This new work focuses on some basic properties of this generalized metric.

On the other hand, the classical Carath\'eodory metric is another generalized metric which is always less than or equal to the hyperbolic metric. The Carath\'eodory metric in a domain is the largest pull back of the hyperbolic metric. Similar to the case of the Kobayashi metric, the Carath\'eodory metric also agrees with the hyperbolic metric on simply connected domains. Furthermore, it satisfies the generlized Schwarz lemma for holomorphic function between two domains. Analogous to the Carath\'odory metric, in this paper, we generalize the Hurwitz metric and study its basic properties.

Rest of this document is organized as follows: Section \ref{p2sec2} contains preliminary information including terminology, definitions and well known results. We define the generalized Hurwitz metric in the sense of Carath\'eodory Section~\ref{p2sec3} and study its various properties including distance decreasing property for special class of holomorphic function between two domains. Finally, Section~\ref{p2sec4} is devoted to the distance between two points induced by the generalized Hurwitz metric.

\section{preliminaries}\label{p2sec2}

Throughout the paper, unless it is specified, we assume that $\Omega$
is an arbitrary domain and $Y$ is a proper subdomain in $\mathbb{C}$,
the complex plane. Symbolically, we write $\Omega\subset \mathbb{C}$ and $Y\subsetneq \mathbb{C}$.
We denote $\mathcal{H}(\Omega,Y)$ by the set of all holomorphic functions from $\Omega$ into $Y$.
For a fixed $w\in\Omega$, we define the following notation:
$$
\mathcal{H}^s_w(\Omega,Y)
=\{h\in\mathcal{H}(\Omega,Y),~ h(w)=s,~h(z)\neq s~\mbox{ for all } z\in\Omega\setminus\{w\}\}.
$$
The open unit disk $\{z\in\mathbb{C}:\,|z|<1\}$ is denoted by $\mathbb{D}$. The family $\mathcal{H}^0_w(\mathbb{D},Y)$ is known as the {\em Hurwitz family}. More about the Hurwitz family and several other classes of holomorphic functions analogous to the Hurwitz family are discussed in \cite{Min16}. By setting 
$$
F'(0)=r_{Y}(w)=\max\{h'(0):h\in\mathcal{H}^0_w(\mathbb{D},Y)\},
$$ 
the {\em Hurwitz density} is defined as
$$
\eta_Y(w)=\cfrac{2}{F'(0)}=\cfrac{2}{r_Y(w)}.
$$
An equivalent definition of the Hurwitz density can be found in \cite{AS1}. A domain $Y$ is said to be hyperbolic provided its complement $\mathbb{C}\setminus Y$ contains at least two points. 
The supremum of $\{|h'(0)|:h\in\mathcal{H}(\mathbb{D},Y)\}$ leads to the definition of the hyperbolic density. Indeed, for any point $w\in Y$ the {\em hyperbolic density} $\lambda_Y$ is defined as 
$$
\lambda_Y(w)=\cfrac{2}{|g'(0)|},
$$ 
where $g$ is a universal covering mapping from $D$ onto $Y$. Note that existence of such $g$ is guaranteed by the uniformization theorem.
Analogous to the hyperbolic density, we now describe the maximizer for the Hurwitz family. For $s\in Y$, there exists a holomorphic covering map $F:\mathbb{D}\setminus\{0\}\to Y\setminus\{s\}$ 
which extends to $\mathbb{D}$ holomorphically in such a way that $F(0)=s$ and $F'(0)>0$.
This is determined by the subgroup generated by the curve, namely the circle centered at $s$ and radius $\rho$, for some $\rho>0$, in $Y\setminus\{s\}$ of the fundamental group of $Y\setminus\{s\}$. The function $F$ is the unique extremal function for the Hurwitz-extremal problem $\max\{h'(0):h\in\mathcal{H}^w_0(\mathbb{D},Y),~h'(0)>0\}$ and is defined as the {\em Hurwitz covering} map (see \cite{Min16}). 


\section{Carath\'eodory density of the Hurwitz metric}\label{p2sec3}
In \cite{AS1}, by adopting the idea of the Kobayashi metric, we generalized the Hurwitz density for a domain $\Omega\subset\mathbb{C}$
and $Y\subsetneq \mathbb{C}$ as follows:
$$
\eta_\Omega^Y(w)=\inf\frac{\eta_Y(s)}{|h'(s)|},
$$
where $\eta_Y$ is the Hurwitz density on $Y$ and the infimum is taken over all $h\in\mathcal{H}(Y,\Omega)$ satisfying $h(s)=w$, $h(t)\neq w$ for all $t\in Y\setminus\{s\}$, and $h'(s)\neq 0$.
We name the quantity $\eta_\Omega^Y$ by {\it the Kobayashi 
density of the Hurwitz metric of $\Omega$ relative to $Y$}.

As stated at the end of Section~\ref{sec1}, this section is devoted to the introduction of a new density that generalizes the Hurwitz density in the sense of Carath\'eodory. This is defined as follows:

\begin{definition}\label{HC}
Let $w\in\Omega\subsetneq\mathbb{C}$. 
For an element $s\in\mathbb{D}$, we define a new quantity
\begin{equation}\label{p2eq1}
\mathscr{C}_{\Omega}^{\mathbb{D},s}(w)=\sup\eta_{\mathbb{D}}(h(w))|h'(w)|,
\end{equation}
where the supremum is taken over all $h\in \mathcal{H}(\Omega,\mathbb{D})$ such that 
$h(w)=s$, $h(z)\neq s$ for all $z\in\Omega\setminus\{w\}$, i.e. 
for all $h\in \mathcal{H}^s_w(\Omega,\mathbb{D})$.
We call this quantity by {\em the Carath\'eodory
density of the Hurwitz metric of $\Omega$ relative to $\mathbb{D}$. Setting $\mathscr{C}_{\Omega}^{\mathbb{D}}:=\mathscr{C}_{\Omega}^{\mathbb{D},0}.$}
\end{definition}


\begin{remark}
\begin{enumerate}
\item 
Note that on simply connected domains the Hurwitz density agrees with the hyperbolic density, so one can replace $\eta_{\mathbb{D}}$ by the hyperbolic density $\lambda_{\mathbb{D}}$ in Definition~\ref{p2eq1}. 
\item
If $\Omega=\mathbb{C}$, then by Liouville's theorem, the only holomorphic function from $\Omega$ into $\mathbb{D}$ is a constant function, which does not belong to the class
$\mathcal{H}^s_w(\Omega,\mathbb{D})$. Hence, it can be defined that
$\mathscr{C}_{\mathbb{C}}^{\mathbb{D},s}(w)=0$ when the set $\mathcal{H}^s_w(\Omega,\mathbb{D})$ becomes empty. It suggests us to assume that $\mathcal{H}^s_w(\Omega,Y)\neq \emptyset$ throughout the paper for an arbitrary base domain $Y\subsetneq\mathbb{C}$.
\end{enumerate}
\end{remark}


The first basic property of the Carath\'eodory
density of the Hurwitz metric $\mathscr{C}_{\Omega}^{\mathbb{D},s}$ is that the supremum is attained by some holomorphic function $h\in\mathcal{H}^s_w(\Omega,\mathbb{D})$ in  \eqref{p2eq1}. 

\begin{proposition}\label{p2prop1}
Let $\Omega\subsetneq\mathbb{C}$ be a domain
and $\mathcal{H}_w^0(\Omega,\mathbb{D})\neq \emptyset$. Then, the Carath\'eodory
density of the Hurwitz metric $\mathscr{C}_{\Omega}^{\mathbb{D}}$ can be computed by the formula: 
$$\mathscr{C}_{\Omega}^{\mathbb{D}}(w)=2\,\max\{|h'(w)|:\, h\in\mathcal{H}^0_w(\Omega,\mathbb{D})\}.
$$
\end{proposition}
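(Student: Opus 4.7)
The plan is to reduce the identity to a Montel-plus-Hurwitz compactness argument. First I would simplify the factor $\eta_{\mathbb{D}}(h(w))$ appearing in \eqref{p2eq1}. Since $\mathbb{D}$ is simply connected, the remark following Definition~\ref{HC} gives $\eta_{\mathbb{D}}=\lambda_{\mathbb{D}}$; and because every competitor $h\in\mathcal{H}^0_w(\Omega,\mathbb{D})$ satisfies $h(w)=0$, one has $\eta_{\mathbb{D}}(h(w))=\lambda_{\mathbb{D}}(0)=2$. Therefore
\[
\mathscr{C}_{\Omega}^{\mathbb{D}}(w)=2\,\sup\bigl\{|h'(w)|:h\in\mathcal{H}^0_w(\Omega,\mathbb{D})\bigr\},
\]
and the entire content of the proposition is that this supremum is actually attained.

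To produce an extremal function, let $M$ denote the supremum above. If $M=0$ the statement is trivial, as any member of the (non-empty) class $\mathcal{H}^0_w(\Omega,\mathbb{D})$ realises the maximum, so assume $M>0$. Choose a sequence $h_n\in\mathcal{H}^0_w(\Omega,\mathbb{D})$ with $|h_n'(w)|\to M$. Since $\{h_n\}$ is uniformly bounded by $1$ on $\Omega$, Montel's theorem supplies a subsequence (still denoted $h_n$) converging locally uniformly on $\Omega$ to a holomorphic function $h_0\colon\Omega\to\overline{\mathbb{D}}$. By Weierstrass's theorem, $h_n'(w)\to h_0'(w)$, so $|h_0'(w)|=M$; in particular $h_0$ is non-constant and $h_0(w)=0$. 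The maximum modulus principle then forces $h_0(\Omega)\subset\mathbb{D}$.

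The delicate step---the one I expect to be the main obstacle---is verifying the pointed condition $h_0(z)\neq 0$ for $z\in\Omega\setminus\{w\}$, because this property is not preserved by locally uniform limits in general (a priori new zeros could appear in the limit). Here I would apply Hurwitz's theorem: since $h_0\not\equiv 0$ (because $|h_0'(w)|=M>0$), any zero $z_0$ of $h_0$ would be the limit of zeros of $h_n$ inside a small disk around $z_0$, and those zeros must all coincide with the unique zero $w$ of $h_n$; hence $z_0=w$. This gives $h_0\in\mathcal{H}^0_w(\Omega,\mathbb{D})$ and exhibits the extremal function, completing the proof. The only other detail worth a line is to explain why the hypothesis $\mathcal{H}^0_w(\Omega,\mathbb{D})\neq\emptyset$ is required precisely to rule out the vacuous situation (cf.\ the second part of the preceding remark).
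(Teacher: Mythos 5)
Your proposal is correct and follows essentially the same route as the paper's own proof: reduce to $\mathscr{C}_{\Omega}^{\mathbb{D}}(w)=2\sup|h'(w)|$ via $\eta_{\mathbb{D}}(0)=\lambda_{\mathbb{D}}(0)=2$, extract a locally uniformly convergent subsequence by Montel, exclude boundary values by the maximum modulus principle, and use Hurwitz's theorem to show the limit keeps its only zero at $w$. If anything, your write-up is slightly more careful than the paper's at two points it glosses over --- you dispose of the degenerate case $M=0$ (where the Hurwitz argument would fail because the limit could be $h_0\equiv 0$) and you spell out how Hurwitz's theorem forces any zero $z_0$ of $h_0$ to coincide with $w$, rather than merely asserting $h(z)\neq 0$ off $w$ as the paper does.
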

\begin{proof}
Since the members of the family $\mathcal{H}^0_w(\Omega,\mathbb{D})$ are uniformly bounded by $1$, by Montel's Theorem, $\mathcal{H}^0_w(\Omega,\mathbb{D})$ is a normal family. 
By Definition~\ref{HC} there exists a sequence of holomorphic functions $h_n\in\mathcal{H}^0_w(\Omega,\mathbb{D})$ such that $2|h_n'(w)|\to\mathscr{C}_{\Omega}^{\mathbb{D}}(w),$
since $h_n(w)=0$ and $\eta_\mathbb{D}(0)=\lambda_\mathbb{D}(0)=2$. Furthermore, by the open mapping theorem, there exists a subsequence $h_{n_k}$ of $h_n$ which converges to either an open map $h$ or a constant map. Since $h_n\in\mathcal{H}(\Omega,\mathbb{D})$, it follows that $|h(z)|\leq 1$ for all $z\in\Omega$. Note that, if $h(z)$ attains $1$ for some $z\in\Omega$, then by the maximum modulus principle, $|h|= 1,$ contradicting to the fact that $h(w)=0.$ Moreover, by Hurwitz Theorem, there exists an $N\in\mathbb{N}$ such that $h_{n_k}$ and $h$ have 
the same number of zeros for all $n_k\ge N$ in some neighborhood of $w$. Since $h(z)\neq 0$ for all $z\in\Omega\setminus\{w\}$, we conclude by the uniqueness of limit that 
$2|h'(w)|=\mathscr{C}_{\Omega}^{\mathbb{D}}(w),$ which completes the proof.
\end{proof}

\begin{remark}
By a suitable composition of the disk automorphism with the function obtained in Proposition~\ref{p2prop1}, we can prove the existence of the holomorphic function $h$ in
Definition~\ref{HC} when $s\neq 0$. 
\end{remark}


Alike to the case of coinciding of the hyperbolic and Carath\'eodory density on simply connected domains, we now prove that the Hurwitz density
$\eta_{\Omega}$ and the Carath\'eodory density of the Hurwitz metric $\mathscr{C}_{\Omega}^{\mathbb{D},s}$ too agree on simply connected domains $\Omega$.

\begin{proposition}\label{p2prop2}
If $\Omega\subsetneq\mathbb{C}$ is a simply connected domain, then
the Carath\'eodory density of the Hurwitz metric $\mathscr{C}_{\Omega}^{\mathbb{D},s}$ coincides with the Hurwitz density $\eta_\Omega$ as well as with the Kobayashi density of the Hurwitz metric $\eta_{\Omega}^{\mathbb{D}}$. That is, we have
$\mathscr{C}_{\Omega}^{\mathbb{D},s}\equiv\eta_{\Omega}\equiv \eta_{\Omega}^{\mathbb{D}}.$
\end{proposition}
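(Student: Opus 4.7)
The plan is to sandwich each of the generalized densities $\mathscr{C}_\Omega^{\mathbb{D},s}$ and $\eta_\Omega^{\mathbb{D}}$ between $\eta_\Omega$ using two ingredients already available from \cite{Min16}: (a) the distance-decreasing property of the Hurwitz metric for maps $h \in \mathcal{H}^s_w(\Omega,Y)$, and (b) the conformal invariance of the Hurwitz density. The Riemann mapping theorem supplies the extremal map that turns the sandwich into an equality in the simply connected case.

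For the equality $\mathscr{C}_\Omega^{\mathbb{D},s} \equiv \eta_\Omega$, I would first establish the bound $\mathscr{C}_\Omega^{\mathbb{D},s}(w) \leq \eta_\Omega(w)$ with no simple-connectivity hypothesis. For every admissible $h \in \mathcal{H}^s_w(\Omega,\mathbb{D})$, Minda's Hurwitz--Schwarz lemma gives
\[
\eta_{\mathbb{D}}(h(w))\,|h'(w)| \leq \eta_\Omega(w),
\]
and taking the supremum over the admissible family produces the desired inequality. For the reverse bound, since $\Omega \subsetneq \mathbb{C}$ is simply connected, the Riemann mapping theorem furnishes a biholomorphism $\phi \colon \Omega \to \mathbb{D}$, and post-composing with an automorphism of $\mathbb{D}$ I may arrange $\phi(w)=s$. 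Because $\phi$ is injective, $\phi \in \mathcal{H}^s_w(\Omega,\mathbb{D})$, so that by conformal invariance of $\eta$,
\[
\eta_\Omega(w) = \eta_{\mathbb{D}}(s)\,|\phi'(w)| \leq \mathscr{C}_\Omega^{\mathbb{D},s}(w).
\]

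For the equality $\eta_\Omega \equiv \eta_\Omega^{\mathbb{D}}$, the same two ingredients run in the opposite directions. Applying the distance-decreasing property to any admissible $h \in \mathcal{H}(\mathbb{D},\Omega)$ with $h(s)=w$ gives $\eta_\Omega(w)|h'(s)| \leq \eta_{\mathbb{D}}(s)$, hence $\eta_\Omega(w) \leq \eta_{\mathbb{D}}(s)/|h'(s)|$; taking the infimum shows $\eta_\Omega \leq \eta_\Omega^{\mathbb{D}}$ on every proper subdomain. Conversely, on a simply connected $\Omega$ the inverse Riemann map $\phi^{-1}\colon \mathbb{D} \to \Omega$ is admissible in the Kobayashi family, and conformal invariance yields $\eta_\Omega^{\mathbb{D}}(w) \leq \eta_\Omega(w)$. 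This second coincidence is the statement proved in our earlier work \cite{AS1}, which I would simply invoke.

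The proof is essentially mechanical once the two lemmas are in place; there is no real obstacle beyond checking that the Riemann map belongs to the appropriate admissible family, and this is automatic from injectivity. The key conceptual point to emphasize is that the Riemann map is extremal in both the Carath\'eodory and the Kobayashi families simultaneously, which is what forces all three densities to collapse to $\eta_\Omega$.
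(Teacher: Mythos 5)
Your proposal is correct and takes essentially the same route as the paper: the distance-decreasing property of the Hurwitz metric \cite[Theorem~6.1]{Min16} yields $\mathscr{C}_{\Omega}^{\mathbb{D},s}\leq\eta_{\Omega}$, the Riemann map together with conformal invariance \cite[Corollary~6.2]{Min16} gives the reverse inequality, and the identity $\eta_{\Omega}\equiv\eta_{\Omega}^{\mathbb{D}}$ is imported from \cite[Corollary~3.10]{AS1}. Your only addition is making explicit the normalization $\phi(w)=s$ by post-composing with a disk automorphism, a detail the paper's proof leaves implicit.
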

\begin{proof}
By the distance decreasing property of the Hurwitz density (see \cite[Theorem~6.1]{Min16}), for a point $w\in\Omega$ and for any $h\in\mathcal{H}^s_w(\Omega,\mathbb{D})$ we have $\eta_{\mathbb{D}}(h(w))|h'(w)|\leq\eta_{\Omega}(w).$ 
By taking supremum over all $h\in \mathcal{H}^s_w(\Omega,\mathbb{D})$, in one hand, we obtain $\mathscr{C}_{\Omega}^{\mathbb{D},s}(w)\leq\eta_{\Omega}(w).$ On the other hand, to prove the reverse inequality, we consider the conformal homeomorphism $f:\Omega\to \mathbb{D}$ which is guaranteed by Riemann Mapping Theorem. By \cite[Corollary~6.2]{Min16}, it follows that
$$
\eta_\Omega(w)=\eta_\mathbb{D}(h(w))|h'(w)|
\le \mathscr{C}_{\Omega}^{\mathbb{D},s}(w),
$$
where the inequality holds by Definition~\ref{HC}. Thus, we have the identity 
$\mathscr{C}_{\Omega}^{\mathbb{D},s}\equiv\eta_{\Omega}$. 

The second required identity follows from \cite[Corollary~3.10]{AS1}, completing the proof.
\end{proof}


Due to \cite[Corollary~3.10]{AS1}, the Kobayashi density of the Hurwitz metric $\eta_{\Omega}^{\mathbb{D}}$ and the Hurwitz density $\eta_{\Omega}$ both agree on any domain $\Omega$, whereas, in the following result we show that on non-simply connected domains the Carath\'eodory density of the Hurwitz metric $\mathscr{C}_{\Omega}^{\mathbb{D},s}$ is strictly less than the Hurwitz density $\eta_{\Omega}$.

\begin{proposition}\label{p2prop3} Let
$\Omega\subsetneq\mathbb{C}$ be a non-simply connected domain and $\mathscr{C}_{\Omega}^{\mathbb{D}}>0$. Then for an element $w\in\Omega$ we have the strict inequality:
$
\mathscr{C}_{\Omega}^{\mathbb{D}}(w)<\eta_{\Omega}(w).
$ 
\end{proposition}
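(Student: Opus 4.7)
The plan is to argue by contradiction: assume equality $\mathscr{C}_{\Omega}^{\mathbb{D}}(w) = \eta_{\Omega}(w)$ holds, and derive that $\Omega$ must be simply connected, contradicting the hypothesis. The key tool will be the Schwarz lemma applied to the composition of an extremal function with the Hurwitz covering map.

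First, I would invoke Proposition \ref{p2prop1} to produce an extremal $h \in \mathcal{H}^0_w(\Omega,\mathbb{D})$ realizing the supremum, so that $2|h'(w)| = \mathscr{C}_{\Omega}^{\mathbb{D}}(w)$. The hypothesis $\mathscr{C}_{\Omega}^{\mathbb{D}}(w) > 0$ ensures $h'(w) \neq 0$, which is essential for the rigidity argument below. Next, from the discussion at the end of Section~\ref{p2sec2}, there exists a Hurwitz covering map $F : \mathbb{D} \to \Omega$, i.e., a map extending holomorphically the covering $F : \mathbb{D} \setminus \{0\} \to \Omega \setminus \{w\}$, with $F(0) = w$ and $F'(0) = r_{\Omega}(w) = 2/\eta_{\Omega}(w) > 0$.

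The core of the proof is to examine $g := h \circ F : \mathbb{D} \to \mathbb{D}$. We have $g(0) = h(w) = 0$, and by the chain rule combined with the assumed equality,
$$
|g'(0)| = |h'(w)| \, F'(0) = \frac{\mathscr{C}_{\Omega}^{\mathbb{D}}(w)}{2} \cdot \frac{2}{\eta_{\Omega}(w)} = 1.
$$
The Schwarz lemma then forces $g$ to be a rotation of $\mathbb{D}$, in particular injective on $\mathbb{D}$.

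Finally, I would translate injectivity of $g$ back to $F$: if $F(z_1) = F(z_2)$, then $g(z_1) = g(z_2)$, hence $z_1 = z_2$, so $F$ is injective. But an injective holomorphic covering $F|_{\mathbb{D} \setminus \{0\}} : \mathbb{D} \setminus \{0\} \to \Omega \setminus \{w\}$ is a biholomorphism, and filling in the point $0 \mapsto w$ yields a biholomorphism $F : \mathbb{D} \to \Omega$. This contradicts the assumption that $\Omega$ is non-simply connected, finishing the proof. The main obstacle I anticipate is the very last step, namely justifying cleanly that a bijective (and hence one-sheeted) holomorphic covering extends across the puncture to a global biholomorphism; however, since $F$ is already given to extend holomorphically to $\mathbb{D}$ with $F(0) = w$ and since holomorphic injections have nonvanishing derivative, this reduces to a short application of the open mapping theorem and the inverse function theorem.
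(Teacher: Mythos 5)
Your argument has one genuine logical gap as structured: a contradiction that rules out the equality $\mathscr{C}_{\Omega}^{\mathbb{D}}(w)=\eta_{\Omega}(w)$ only proves $\mathscr{C}_{\Omega}^{\mathbb{D}}(w)\neq\eta_{\Omega}(w)$, whereas the proposition asserts the strict inequality $\mathscr{C}_{\Omega}^{\mathbb{D}}(w)<\eta_{\Omega}(w)$; nowhere in your proposal do you establish or cite the a priori bound $\mathscr{C}_{\Omega}^{\mathbb{D}}(w)\leq\eta_{\Omega}(w)$. Fortunately the repair costs one line and uses your own construction: applying the non-strict Schwarz lemma to $g:=h\circ F:\mathbb{D}\to\mathbb{D}$ with $g(0)=0$ gives, unconditionally, $|g'(0)|=|h'(w)|\,F'(0)\leq 1$, which is exactly $\mathscr{C}_{\Omega}^{\mathbb{D}}(w)\leq\eta_{\Omega}(w)$; alternatively this upper bound is the distance-decreasing property of the Hurwitz density \cite[Theorem~6.1]{Min16}, as used in the first half of the proof of Proposition~\ref{p2prop2} (or Proposition~\ref{p2prop4} specialized to $Y=\mathbb{D}$, though that appears later in the paper). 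You should state this explicitly before invoking the dichotomy.

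With that line added, your proof is correct and is essentially the contrapositive of the paper's. The paper argues directly: since $\Omega$ is non-simply connected, the Hurwitz covering $g:\mathbb{D}\to\Omega$ cannot be one-one, so $h\circ g$ is not injective, hence not a rotation, and the strict form of the Schwarz lemma yields $\mathscr{C}_{\Omega}^{\mathbb{D}}(w)=\eta_{\mathbb{D}}((h\circ g)(0))\,|(h\circ g)'(0)|/|g'(0)|<\eta_{\mathbb{D}}(0)/|g'(0)|=\eta_{\Omega}(w)$ in a single computation. You instead run the rigidity direction: equality forces $|g'(0)|=1$, Schwarz rigidity makes $h\circ F$ a rotation, injectivity passes to $F$, and $\Omega$ is biholomorphic to $\mathbb{D}$. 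The final step you flag as the main obstacle is in fact easier than you anticipate: $F:\mathbb{D}\to\Omega$ is already a holomorphic bijection (injective by your argument, surjective because the covering maps $\mathbb{D}\setminus\{0\}$ onto $\Omega\setminus\{w\}$ and $F(0)=w$), and a holomorphic bijection between plane domains is automatically a biholomorphism, so no separate analysis at the puncture is needed. Note also that the paper's direct route quietly relies on the same fact in the opposite direction (its assertion that $g$ ``can not be one-one'' is exactly the contrapositive of your concluding step), so the two proofs are logically interchangeable; the paper's phrasing avoids the need for the equality-exclusion framing and hence never has to separate the bound $\leq$ from the strictness.
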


\begin{proof}
Let $w\in\Omega.$ Since $\Omega\subsetneq\mathbb{C},$ there exists a Hurwitz covering map $g:\mathbb{D}\to \Omega$ with $g(0)=w.$ By Proposition \ref{p2prop1}, there exists a function $h\in\mathcal{H}^0_w(\Omega,\mathbb{D})$
such that 
\begin{equation}\label{p2eq}
\mathscr{C}_{\Omega}^{\mathbb{D}}(w)=2|h'(w)|=\eta_{\mathbb{D}}(h(w))|h'(w)|
\end{equation}
holds, since $h(w)=0$ and $\eta_\mathbb{D}(0)=\lambda_\mathbb{D}(0)=2$.
Thus, we observe that the composition $h\circ g$ is a holomorphic function from $\mathbb{D}$ to $\mathbb{D}$ that fixes the origin. Since $\Omega$ is non-simply connected, 
the covering map $g$ can not be one-one and hence the composition $h\circ g$ can never be conformal. Thus, by the classical Schwarz lemma we conclude the strict inequality
$$
\lambda_{\mathbb{D}}((h\circ g)(0))|(h\circ g)'(0)|<\lambda_{\mathbb{D}}(0).
$$ 
Note that the hyperbolic density coincides with the Hurwitz density on simply connected hyperbolic domains (see \cite[p.~15]{Min16}). Therefore, it follows that
\begin{equation}\label{p2eq2}
\eta_{\mathbb{D}}((h\circ g)(0))|(h\circ g)'(0)|<\eta_{\mathbb{D}}(0).
\end{equation}
Since $g$ is a Hurwitz covering map, by 
\cite[Theorem~6.1]{Min16}, we have the equality
\begin{equation}\label{p2eq3}
\eta_{\Omega}(g(0))|g'(0)|=\eta_{\mathbb{D}}(0).
\end{equation}
Combining \eqref{p2eq2} and \eqref{p2eq3}, we obtain from \eqref{p2eq} that
$$
\mathscr{C}_{\Omega}^{\mathbb{D}}(w)=\eta_{\mathbb{D}}(h(w))|h'(w)|
=\eta_{\mathbb{D}}((h\circ g)(0))
\cfrac{|(h\circ g)'(0)|}{|g'(0)|}<\cfrac{\eta_{\mathbb{D}}(0)}{|g'(0)|}=\eta_{\Omega}(w),
$$ 
where the second equality follows by the chain rule.
\end{proof}


Since the Hurwitz density can be defined on a proper subdomain of the complex plane, a natural way of further generalizing the Carath\'eodory density of the Hurwitz metric $\mathscr{C}_\Omega^{\mathbb{D},s}$ by changing the base domain from the unit disk to a proper subdomain $Y$ of $\mathbb{}C$. The definition is as follows: 

\begin{definition}\label{p2def3.7}
Let $Y\subsetneq\mathbb{C}$ and $\Omega\subset \mathbb{C}$ be domains. For $w\in\Omega$ and $s\in Y$, the {\em Carath\'eodory density of the Hurwitz metric of $\Omega$ relative to the base domain $Y$} is defined as 
$$
\mathscr{C}_{\Omega}^{Y,s}(w)=\sup\eta_Y(h(w))|h'(w)|,
$$
where the supremum is taken over all $h\in \mathcal{H}(\Omega,Y)$ such that 
$h(w)=s$, $h(z)\neq s$ for all $z\in\Omega\setminus\{w\}$, i.e. 
for all $h\in \mathcal{H}^s_w(\Omega,Y)$.
\end{definition}


In \cite{AS1} we have noticed that the Kobayashi density of the Hurwitz metric $\eta_\Omega^Y$ exceeds over the Hurwitz density $\eta_\Omega$ whereas in case of the Carath\'eodory density of the Hurwitz metric $\mathscr{C}_{\Omega}^{Y,s}$, we prove that it lacks the Hurwitz density on proper subdomains of $\mathbb{C}$.

\begin{proposition}\label{p2prop4}
Let $\Omega$ and $Y$ be proper subdomains of the complex plane $\mathbb{C}$. If for an element $s\in Y$, we assume $\mathscr{C}_{\Omega}^{Y,s}>0$ then 
$$
\eta_{\Omega}(w)\geq\mathscr{C}_{\Omega}^{Y,s}(w)
$$ 
holds for every $w\in\Omega$.
\end{proposition}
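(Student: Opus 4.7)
The plan is to obtain the bound by applying the distance decreasing property of the Hurwitz density pointwise to each competitor in the defining supremum, then passing to the supremum. This is a direct mirror of the first half of the proof of Proposition~\ref{p2prop2}, with the base disk $\mathbb{D}$ replaced throughout by the arbitrary proper subdomain $Y$. Concretely, I would fix $w \in \Omega$, $s \in Y$, and an arbitrary $h \in \mathcal{H}^s_w(\Omega,Y)$. Such $h$ is a holomorphic map between proper subdomains of $\mathbb{C}$ which, by the definition of the class $\mathcal{H}^s_w$, attains the value $s$ only at $w$; this is precisely the data required to invoke Minda's distance decreasing property of the Hurwitz density \cite[Theorem~6.1]{Min16}, yielding
$$
\eta_Y(h(w))\,|h'(w)| \;\leq\; \eta_\Omega(w).
$$

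Since the right-hand side is independent of $h$ and the above inequality holds for every $h \in \mathcal{H}^s_w(\Omega,Y)$, taking the supremum over this class on the left gives exactly $\mathscr{C}_{\Omega}^{Y,s}(w) \leq \eta_\Omega(w)$, which is the assertion.

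The argument is short because only the "easy" direction is needed: unlike in Proposition~\ref{p2prop2}, there is no reverse inequality to establish and hence no need for a Riemann-mapping-type extremal function. The only point that warrants a brief check is that \cite[Theorem~6.1]{Min16}, invoked in the proof of Proposition~\ref{p2prop2} with target $\mathbb{D}$, is in fact stated in \cite{Min16} for general proper subdomains of $\mathbb{C}$; granting this, the proof is essentially a one-line application followed by taking supremum. The standing hypothesis $\mathscr{C}_{\Omega}^{Y,s}>0$ is only used to ensure that the class $\mathcal{H}^s_w(\Omega,Y)$ is nonempty so that the supremum is meaningful, and plays no further role in the estimate. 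I therefore do not anticipate any genuine obstacle in this proof.
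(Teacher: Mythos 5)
Your proposal is correct and follows essentially the same route as the paper: apply Minda's distance decreasing property of the Hurwitz density \cite[Theorem~6.1]{Min16} to an arbitrary $h\in\mathcal{H}^s_w(\Omega,Y)$ and then take the supremum over this class. Your added remarks (that only the easy direction is needed, and that the hypothesis $\mathscr{C}_{\Omega}^{Y,s}>0$ merely guarantees nonemptiness of the competing class) are consistent with the paper's argument.
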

\begin{proof}
By the distance decreasing property of the Hurwitz density, for $w\in\Omega,s\in Y$ and for any $h\in\mathcal{H}^s_w(\Omega,Y)$ we have
$$
\eta_Y(h(w))|h'(w)|\leq\eta_{\Omega}(w).
$$
Taking the supremum over all $h\in\mathcal{H}^s_w(\Omega,Y)$ on both sides, we obtain
$$
\mathscr{C}_{\Omega}^{Y,s}(w)\leq\eta_{\Omega}(w).
$$ 
Since $w\in\Omega$ was arbitrary, we conclude the proof as desired.
\end{proof}


Recall that the Hurwitz density and the hyperbolic density agree on simply connected domains. Analogous to this, we now prove that upon some specific conditions the Carath\'eodory density of the Hurwitz metric $\mathscr{C}_{\Omega}^{Y,s}$ and the Hurwitz density $\eta_{\Omega}$ coincide and in a more special situation, they also coincide with the Kobayashi density of the Hurwitz metric $\eta_\Omega^Y$.

\begin{proposition}\label{p2prop5}
Let $\Omega,Y\subsetneq \mathbb{C}$ be domains. 
Suppose that for every $s\in Y$ there exists a point $w\in\Omega$ and a holomorphic covering map 
$g_s:\Omega\setminus\{w\}\to Y\setminus\{s\}$ which extends to a holomorphic function 
$g:\Omega\to Y$ with $g(w)=s$ and $g'(w)\neq 0$. If $\mathscr{C}_{\Omega}^{Y,s}>0$, then 
$$
\mathscr{C}_{\Omega}^{Y,s}\equiv\eta_{\Omega}.
$$
In particular, when $Y=\Omega$, we have
$$
\mathscr{C}_{\Omega}^{\Omega,w}\equiv\eta_{\Omega}\equiv\zeta_{\Omega}^{\Omega}.
$$
\end{proposition}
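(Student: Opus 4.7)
The first step is to invoke Proposition~\ref{p2prop4}, which gives $\mathscr{C}_{\Omega}^{Y,s}(w)\leq\eta_{\Omega}(w)$ for every $w\in\Omega$, so the content of the statement is the reverse inequality, and by hypothesis this needs to be established at the distinguished point $w$ attached to $s$. The extension $g:\Omega\to Y$ lies in $\mathcal{H}_w^s(\Omega,Y)$: it is holomorphic, sends $w$ to $s$, and avoids $s$ on $\Omega\setminus\{w\}$ because its restriction there maps into $Y\setminus\{s\}$. Plugging $g$ into Definition~\ref{p2def3.7} gives $\mathscr{C}_{\Omega}^{Y,s}(w)\geq\eta_Y(s)|g'(w)|$, so the whole problem reduces to proving the identity $\eta_Y(s)|g'(w)|=\eta_{\Omega}(w)$.

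To reach this identity I would lift to the unit disk via the Hurwitz covering maps introduced in Section~\ref{p2sec2}. Let $F:\mathbb{D}\to Y$ be the Hurwitz covering at $s$ and $G:\mathbb{D}\to\Omega$ the Hurwitz covering at $w$, so $F(0)=s$, $G(0)=w$ with $F'(0),G'(0)>0$, and by \cite[Theorem~6.1]{Min16} one has $\eta_Y(s)F'(0)=2=\eta_{\Omega}(w)G'(0)$. The composite $g\circ G:\mathbb{D}\to Y$ satisfies $(g\circ G)(0)=s$, and on the punctured disk $\mathbb{D}\setminus\{0\}$ it is the composition of two holomorphic coverings, hence a covering of $Y\setminus\{s\}$ that extends holomorphically across $0$.

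The heart of the argument is identifying $g\circ G$ with $F$ up to a rotation. The characteristic subgroup of $G|_{\mathbb{D}\setminus\{0\}}$ is the infinite cyclic group generated by the image under $G_{*}$ of a small loop around $0$, which $G$ carries to a simple loop around $w$ in $\Omega\setminus\{w\}$. Because $g'(w)\neq 0$, the map $g$ is biholomorphic near $w$, so $g_{*}$ pushes this loop to a simple loop of winding number one around $s$ in $Y\setminus\{s\}$. That is precisely the characteristic subgroup defining the Hurwitz covering $F$, so $g\circ G$ and $F$ are isomorphic as coverings of $Y\setminus\{s\}$, and hence $g\circ G=F\circ R_\theta$ for some rotation $R_\theta(z)=e^{i\theta}z$ of $\mathbb{D}$ (the relevant deck element of $F$). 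I expect this monodromy step to be the principal obstacle, since it requires verifying that covering-space theory applies to the restricted maps and that the cyclic generator is faithfully transported through both $G$ and $g$.

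Once the rotation relation is in hand, comparing derivatives at $0$ gives $|(g\circ G)'(0)|=F'(0)$, while the chain rule gives $|g'(w)|\,G'(0)=|(g\circ G)'(0)|$, so $|g'(w)|=F'(0)/G'(0)$. Multiplying through by $\eta_Y(s)$ and using $\eta_Y(s)F'(0)=2=\eta_{\Omega}(w)G'(0)$ yields $\eta_Y(s)|g'(w)|=\eta_{\Omega}(w)$, as required. For the special case $Y=\Omega$, the identity map $g=\mathrm{id}_\Omega$ (with $w=s$) verifies the hypothesis for every $s\in\Omega$, so the equality propagates to all of $\Omega$; the coincidence with the Kobayashi density of the Hurwitz metric at $Y=\Omega$ then follows from \cite[Corollary~3.10]{AS1}.
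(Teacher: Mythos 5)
Your proof is correct, but at the decisive step it takes a genuinely different route from the paper. Both arguments share the same skeleton: the upper bound $\mathscr{C}_{\Omega}^{Y,s}(w)\leq\eta_{\Omega}(w)$ from Proposition~\ref{p2prop4}, and the lower bound obtained by observing that the extension $g$ lies in $\mathcal{H}^s_w(\Omega,Y)$ and plugging it into Definition~\ref{p2def3.7}, so that everything reduces to the identity $\eta_Y(s)|g'(w)|=\eta_{\Omega}(w)$. The paper obtains that identity in one line by citing the equality part of \cite[Theorem~6.1]{Min16}, which states precisely that a map whose punctured restriction is a covering of punctured domains is an isometry for the Hurwitz density. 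You instead re-prove that equality from scratch: you lift through the Hurwitz coverings $F$ and $G$ at $s$ and $w$, check that $g\circ G$ restricted to $\mathbb{D}\setminus\{0\}$ is a covering of $Y\setminus\{s\}$ extending across $0$, identify its characteristic subgroup (via the injectivity of $G$ near $0$ and of $g$ near $w$, so a small circle transports to a winding-number-one loop about $s$) with that of the Hurwitz covering $F$, and conclude $g\circ G=F\circ R_\theta$ by uniqueness of coverings with conjugate characteristic subgroups, after which the derivative comparison gives the identity. This monodromy argument is sound --- it is essentially the proof of Minda's equality statement --- so your version is self-contained where the paper's is a citation; the paper's version buys brevity. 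Two small inaccuracies to fix: the rotation $R_\theta$ arises as the covering isomorphism between $g\circ G$ and $F$ (a biholomorphism of the punctured disk extending to a rotation), not as ``the relevant deck element of $F$,'' since the Hurwitz covering need not be regular; and for the final identity $\eta_{\Omega}\equiv\zeta_{\Omega}^{\Omega}$ in the case $Y=\Omega$ the paper cites \cite[Proposition~3.9]{AS1} (the base-domain-$\Omega$ statement), whereas \cite[Corollary~3.10]{AS1}, which you cite, concerns the base domain $\mathbb{D}$. Your explicit remark that the identity map witnesses the hypothesis when $Y=\Omega$, and that the equality is established at the distinguished point $w$ attached to $s$, is a point the paper glosses over with ``since $w$ is arbitrary,'' and is worth keeping.
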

\begin{proof}
By the distance decreasing property of the Hurwitz density (see the second part of \cite[Theorem~6.1]{Min16}), we have
$$
\eta_Y(g(w))|g'(w)|=\eta_{\Omega}(w).
$$
Now, plugging the holomorphic covering map $g_s$ into Definition~\ref{p2def3.7}, in one hand we obtain 
$$
\mathscr{C}_{\Omega}^{Y,s}(w)\ge \eta_Y(g(w))|g'(w)|=\eta_{\Omega}(w).
$$ 
On the other hand, the reverse inequality follows from Proposition \ref{p2prop4}. Since $w$ is arbitrary, the Carath\'eodory density of the Hurwitz metric $\mathscr{C}_{\Omega}^{Y,s}$ and the Hurwitz density
$\eta_{\Omega}$ both agree over $\Omega$.

The proof of the second part is a combination of the above identity that we just proved and the identity proved in \cite[Proposition~3.9]{AS1}.
\end{proof}

An instant corollary to Proposition~\ref{p2prop5} is that on simply connected domains both the Hurwitz density $\eta_{\Omega}$ and the Carath'eodory density of the Hurwitz metric $\mathscr{C}_{\Omega}^{Y,s}$ agree.

\begin{corollary}
If $\Omega\subsetneq\mathbb{C}$ be a simply connected domain and $Y\subsetneq\mathbb{C}$ be an arbitrary domain, then
$$
\mathscr{C}_{\Omega}^{Y,s}\equiv\eta_{\Omega},
$$  
where $s\in Y$.
\end{corollary}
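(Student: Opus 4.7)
The plan is to verify the hypothesis of Proposition~\ref{p2prop5} in the simply connected setting; the proposition then delivers the identity immediately. The upper bound $\mathscr{C}_{\Omega}^{Y,s}(w) \leq \eta_\Omega(w)$ is already supplied by Proposition~\ref{p2prop4}, so the essential content is the reverse inequality, which Proposition~\ref{p2prop5} produces as soon as we can exhibit, for each pair $(w,s) \in \Omega \times Y$, a holomorphic map $g \colon \Omega \to Y$ with $g(w) = s$, $g'(w) \neq 0$, whose restriction $g|_{\Omega \setminus \{w\}} \colon \Omega \setminus \{w\} \to Y \setminus \{s\}$ is a covering.

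To build this $g$, I would fix arbitrary $w \in \Omega$ and $s \in Y$. Since $\Omega \subsetneq \mathbb{C}$ is simply connected, the Riemann Mapping Theorem provides a conformal isomorphism $\phi \colon \Omega \to \mathbb{D}$ normalized so that $\phi(w) = 0$. Since $Y \subsetneq \mathbb{C}$, the construction recalled at the end of Section~\ref{p2sec2} furnishes a Hurwitz covering $F \colon \mathbb{D} \to Y$ with $F(0) = s$, $F'(0) > 0$, and whose restriction $F|_{\mathbb{D} \setminus \{0\}} \colon \mathbb{D} \setminus \{0\} \to Y \setminus \{s\}$ is a holomorphic covering. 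Setting $g := F \circ \phi$ gives a holomorphic map $\Omega \to Y$ with $g(w) = s$ and $g'(w) = F'(0)\phi'(w) \neq 0$, and because $\phi|_{\Omega \setminus \{w\}}$ is a biholomorphism onto $\mathbb{D} \setminus \{0\}$, the composition $g|_{\Omega \setminus \{w\}}$ is still a covering onto $Y \setminus \{s\}$. In particular $g \in \mathcal{H}^s_w(\Omega,Y)$, so $\mathscr{C}_{\Omega}^{Y,s}(w) > 0$, which is the remaining standing hypothesis of Proposition~\ref{p2prop5}.

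With the map $g$ in hand, the hypothesis of Proposition~\ref{p2prop5} is verified uniformly over $\Omega \times Y$, and its conclusion yields $\mathscr{C}_{\Omega}^{Y,s} \equiv \eta_\Omega$ directly. The only place where I anticipate having to be careful is the invocation of the Hurwitz covering $F$ at an \emph{arbitrary} prescribed base point $s \in Y$, since the construction depends on a choice of small-loop homotopy class around $s$; one has to cite Section~\ref{p2sec2} at the correct $s$ rather than at a fixed reference point. Once that is acknowledged, the covering property of the composition $g = F \circ \phi$ is routine, and no further work is needed beyond quoting Propositions~\ref{p2prop4} and~\ref{p2prop5}.
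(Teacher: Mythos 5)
Your proposal is correct and follows essentially the same route as the paper's own proof: compose a Riemann map $\Omega\to\mathbb{D}$ sending $w$ to $0$ with the Hurwitz covering $\mathbb{D}\to Y$ based at $s$, observe the composition restricts to a covering $\Omega\setminus\{w\}\to Y\setminus\{s\}$, and invoke Proposition~\ref{p2prop5}. If anything, you are slightly more careful than the paper, which fixes the Hurwitz covering first and obtains the identity only ``for some $s\in Y$,'' whereas you correctly base the Hurwitz covering at an arbitrary prescribed $s$ and verify the hypothesis for every pair $(w,s)$.
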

\begin{proof}
Since $Y\subsetneq\mathbb{C}$, there exists
a Hurwitz covering map $g:\mathbb{D}\to Y$.
Now, $\Omega\subsetneq\mathbb{C}$ being a simply connected domain, by Riemann Mapping Theorem, 
we would get a conformal mapping
$h:\,\Omega\to \mathbb{D}$ with $h(w)=0$ and $h'(w)> 0$ for some $w\in\Omega$. Then the composition $g\circ h$
is a holomorphic covering map from $\Omega\setminus\{w\}$
onto $Y\setminus\{s\}$ for some $s\in Y$ that can be extended from $\Omega$ onto $Y$ by taking $w$ to $s$ with its derivative non-zero at the point $w$. The proof now follows by Proposition~\ref{p2prop5}.
\end{proof}


Recall that the hyperbolic density $\lambda_\Omega$, the Hurwitz density $\eta_\Omega$ and the Kobayashi density of the Hurwitz metric $\eta_\Omega^Y$ satisfy the distance decreasing property. Note that, in case of the hyperbolic metric the distance decreasing property is also known as the generalized Schwarz-Pick lemma. Alike to these properties we here show that the Carath\'eodory density of the Hurwitz metric $\mathscr{C}_{\Omega}^{Y,s}$ too satisfies the distance decreasing property.

\begin{theorem}\label{p2thm1}$($Distance decreasing property$)$
Let $\Omega_1,\Omega_2\subset\mathbb{C}$ and $Y\subsetneq\mathbb{C}$ be domains. If there exists a holomorphic function $f$ from $\Omega_1$ into $\Omega_2$ with $f(a)=b,\,f(s)\neq b$ for all $s\in\Omega_1\setminus\{a\}$, then
$$
\mathscr{C}_{\Omega_2}^{Y,c}(f(a))|f'(a)|\leq\mathscr{C}_{\Omega_1}^{Y,c}(a),
$$ 
where $c\in Y$.
\end{theorem}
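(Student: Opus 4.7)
The plan is to carry out a standard pull-back argument analogous to the classical Carath\'eodory distance decreasing proof, taking full advantage of the non-attainment hypothesis that has been imposed on $f$ precisely so that admissibility of test functions is preserved under composition.

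First I would fix an arbitrary $h\in\mathcal{H}^c_{f(a)}(\Omega_2,Y)=\mathcal{H}^c_b(\Omega_2,Y)$ and form the composition $H:=h\circ f:\Omega_1\to Y$. The key step is to check that $H\in\mathcal{H}^c_a(\Omega_1,Y)$. Clearly $H(a)=h(f(a))=h(b)=c$. For $z\in\Omega_1\setminus\{a\}$, the hypothesis on $f$ gives $f(z)\neq b$, so $f(z)\in\Omega_2\setminus\{b\}$; since $h\in\mathcal{H}^c_b(\Omega_2,Y)$ means $h(w)\neq c$ for every $w\in\Omega_2\setminus\{b\}$, we conclude $H(z)=h(f(z))\neq c$. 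This is the only place where the non-attainment condition on $f$ is essential, and it is also the part of the argument most likely to be overlooked if one just mimicked the classical Schwarz--Pick proof.

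Next, by Definition~\ref{p2def3.7} applied to $H\in\mathcal{H}^c_a(\Omega_1,Y)$, I would record the inequality
$$
\eta_Y(H(a))\,|H'(a)|\le \mathscr{C}_{\Omega_1}^{Y,c}(a).
$$
The chain rule gives $H'(a)=h'(f(a))f'(a)=h'(b)f'(a)$, and $H(a)=c$, so this reads
$$
\eta_Y(c)\,|h'(b)|\,|f'(a)|\le \mathscr{C}_{\Omega_1}^{Y,c}(a).
$$

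Finally, since the right hand side is independent of the chosen $h$ and $|f'(a)|$ is a constant with respect to $h$, taking the supremum over all $h\in\mathcal{H}^c_b(\Omega_2,Y)$ on the left hand side yields
$$
\mathscr{C}_{\Omega_2}^{Y,c}(b)\,|f'(a)|\le \mathscr{C}_{\Omega_1}^{Y,c}(a),
$$
which is exactly the desired inequality once we recall $b=f(a)$. There is no serious obstacle here; the whole proof is essentially a careful bookkeeping exercise, with the only non-trivial verification being that the composition $h\circ f$ lies in the admissible class $\mathcal{H}^c_a(\Omega_1,Y)$.
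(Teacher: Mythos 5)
Your proof is correct and follows essentially the same route as the paper: compose an admissible test function $h\in\mathcal{H}^c_b(\Omega_2,Y)$ with $f$, verify via the non-attainment hypothesis that $h\circ f\in\mathcal{H}^c_a(\Omega_1,Y)$, and compare suprema. The only cosmetic differences are that the paper extracts a near-extremal $h$ with an $\epsilon$-argument instead of your direct supremum, and it explicitly disposes of the trivial case $\mathcal{H}^c_b(\Omega_2,Y)=\emptyset$ (where $\mathscr{C}_{\Omega_2}^{Y,c}=0$), which your phrase \emph{fix an arbitrary $h$} silently presupposes is non-vacuous.
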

\begin{proof}
If $\mathcal{H}_b^c(\Omega_2,Y)=\emptyset$, then $\mathscr{C}_{\Omega_2}^{Y,c}=0$ and hence there is nothing to prove. Therefore, without loss of generality we assume that $\mathcal{H}_b^c(\Omega_2,Y)\neq \emptyset$.

By the definition of $\mathscr{C}_{\Omega_2}^{Y,c}(b)$, for every $\epsilon>0$ there exists a holomorphic function $h$ from $\Omega_2$ into $Y$ with $h(b)=c, h(s)\neq c$ for all $s\in\Omega_2\setminus\{b\}$ for some $c\in Y,$ such that 
\begin{equation}\label{p2eq4}
\mathscr{C}_{\Omega_2}^{Y,c}(b)-\epsilon\leq\eta_Y(h(b))|h'(b)|.
\end{equation}
Suppose that $f$ is a holomorphic function from $\Omega_1$ into $\Omega_2$ with $f(a)=b,\,f(s)\neq b$ for all $s\in\Omega_1\setminus\{a\}$.
Now the composition function $h\circ f\in\mathcal{H}(\Omega_1,Y)$ satisfies $(h\circ f)(a)=c$.
Furthermore, $(h\circ f)(t)\neq c$ for all $t\in\Omega_1\setminus\{a\}$ as $b\notin f(\Omega_1)\setminus\{a\}$ and $c\notin h(\Omega_2)\setminus\{b\}$. Now, by plugging the map $h\circ f$ into the definition of $\mathscr{C}_{\Omega_1}^{Y,c}(a),$ it follows that
\begin{equation}\label{p2eq5}
\mathscr{C}_{\Omega_1}^{Y,c}(a)
\geq\eta_{Y}((h\circ f)(a))|(h\circ f)'(a)|
=\eta_{Y}(h(b))|h'(b)||f'(a)|.
\end{equation}
Combining \eqref{p2eq4} and \eqref{p2eq5}, we obtain
$$
\mathscr{C}_{\Omega_1}^{Y,c}(a)\geq(\mathscr{C}_{\Omega_2}^{Y,c}(b) - \epsilon)|f'(a)|
$$
which holds for every $\epsilon>0$.
Letting $\epsilon\to 0$, we have the desired inequality. 
\end{proof}

As a direct consequence of Theorem~\ref{p2thm1}, we obtain the conformal invariance property and monotonicity property of the Carath\'eodory density of the Hurwitz metric $\mathscr{C}_\Omega^{Y,s}$ as follows:

\begin{corollary}$($Conformal invariance property$)$
If $f$ is a conformal mapping from a domain $\Omega_1\subset
\mathbb{C}$ onto another domain $\Omega_2\subset\mathbb{C}$, then for a base domain $Y\subsetneq\mathbb{C}$ we have

$$
\mathscr{C}_{\Omega_2}^{Y,s}(f(w))|f'(w)|=\mathscr{C}_{\Omega_1}^{Y,s}(w),
$$
for all $w\in\Omega_1$ and $s\in Y$.
\end{corollary}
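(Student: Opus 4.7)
My plan is to deduce this corollary by applying the distance decreasing property (Theorem \ref{p2thm1}) in both directions: once to $f$ itself and once to its inverse $f^{-1}$, then combining the two resulting inequalities to force equality.

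First, I would observe that since $f$ is conformal, it is in particular a holomorphic injection from $\Omega_1$ onto $\Omega_2$ with $f'\neq 0$ everywhere. Thus, with $a=w$ and $b=f(w)$, the hypotheses of Theorem \ref{p2thm1} are automatically met: injectivity guarantees $f(t)\neq f(w)$ for all $t\in\Omega_1\setminus\{w\}$. Applying the theorem gives
$$
\mathscr{C}_{\Omega_2}^{Y,s}(f(w))|f'(w)|\leq \mathscr{C}_{\Omega_1}^{Y,s}(w).
$$

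For the reverse inequality, I would exploit the fact that $f^{-1}\colon\Omega_2\to\Omega_1$ is itself a conformal mapping, hence satisfies the same hypotheses as $f$ (with the roles of domain/codomain swapped). Applying Theorem \ref{p2thm1} to $f^{-1}$ at the point $f(w)\in\Omega_2$, whose image is $w\in\Omega_1$, yields
$$
\mathscr{C}_{\Omega_1}^{Y,s}(w)\,\bigl|(f^{-1})'(f(w))\bigr|\leq \mathscr{C}_{\Omega_2}^{Y,s}(f(w)).
$$
Since $f'(w)\neq 0$, the chain rule gives $(f^{-1})'(f(w))=1/f'(w)$, so the last inequality rewrites as
$$
\mathscr{C}_{\Omega_1}^{Y,s}(w)\leq \mathscr{C}_{\Omega_2}^{Y,s}(f(w))|f'(w)|.
$$
Combining the two opposite inequalities gives the desired equality.

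There is no real obstacle; the only point requiring care is verifying that the non-trivial injectivity clause in the hypothesis of Theorem \ref{p2thm1} holds for both $f$ and $f^{-1}$, which is automatic from conformality. This is the standard symmetry argument used to promote a distance-decreasing property into a conformal-invariance statement, mirroring the analogous results already established for the hyperbolic density, the Hurwitz density $\eta_\Omega$, and the Kobayashi density of the Hurwitz metric $\eta_\Omega^Y$.
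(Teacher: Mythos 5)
Your proposal is correct and matches the paper's intended argument: the paper offers no written proof, stating only that the corollary is ``a direct consequence of Theorem~\ref{p2thm1},'' and the standard two-sided application of that theorem to $f$ and to $f^{-1}$ (with $(f^{-1})'(f(w))=1/f'(w)$), which you carry out, is precisely that consequence.
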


\begin{corollary}$($Domain monotonicity property$)$
If $\Omega_1\subsetneq\Omega_2$ and $Y$ are domains as in Theorem~$\ref{p2thm1}$, then $\mathscr{C}_{\Omega_2}^{Y,s}(w)\leq\mathscr{C}_{\Omega_1}^{Y,s}(w)$ for all $w\in\Omega_1$ and $s\in Y$.
\end{corollary}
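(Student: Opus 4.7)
The plan is to derive this corollary as a direct specialization of the distance decreasing property (Theorem \ref{p2thm1}) applied to the natural inclusion map. Explicitly, I would define $f \colon \Omega_1 \to \Omega_2$ by $f(z) = z$, which is well defined and holomorphic because $\Omega_1 \subsetneq \Omega_2$. Fix an arbitrary $w \in \Omega_1$ and set $a = w$, $b = f(w) = w$.

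Next I would verify the hypotheses of Theorem \ref{p2thm1}. First, $f(a) = b$ holds trivially. Second, because the inclusion is injective, $f(z) = z \neq w = b$ for every $z \in \Omega_1 \setminus \{w\}$. Hence the theorem applies with this $f$, with $a = b = w$, with the given base domain $Y$, and with the given base point $s \in Y$. Theorem \ref{p2thm1} then gives
$$
\mathscr{C}_{\Omega_2}^{Y,s}(f(w))\,|f'(w)| \leq \mathscr{C}_{\Omega_1}^{Y,s}(w).
$$
Since $f(w) = w$ and $f'(w) = 1$, the left-hand side is exactly $\mathscr{C}_{\Omega_2}^{Y,s}(w)$, yielding the desired inequality $\mathscr{C}_{\Omega_2}^{Y,s}(w) \leq \mathscr{C}_{\Omega_1}^{Y,s}(w)$. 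As $w$ was arbitrary, this holds throughout $\Omega_1$.

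Because the argument is a one-line application of the previous theorem, there is no genuine obstacle; the only point worth a brief remark is the degenerate case where $\mathcal{H}^s_w(\Omega_2,Y) = \emptyset$, in which $\mathscr{C}_{\Omega_2}^{Y,s}(w) = 0$ by convention and the inequality is automatic. I would present the proof in two to three sentences, simply invoking Theorem \ref{p2thm1} for the inclusion $f(z) = z$.
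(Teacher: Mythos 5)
Your proof is correct and is exactly the argument the paper intends: the corollary is stated as a direct consequence of Theorem~\ref{p2thm1}, obtained by applying it to the inclusion map $f(z)=z$, whose injectivity guarantees the hypothesis $f(z)\neq b$ for $z\in\Omega_1\setminus\{w\}$ and whose derivative is $1$. Your additional remark about the degenerate case $\mathcal{H}^s_w(\Omega_2,Y)=\emptyset$ is consistent with how the paper handles that situation inside the proof of Theorem~\ref{p2thm1} itself.
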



Until now, we studied the properties of the Carath\'eodory density of the Hurwitz metric $\mathscr{C}_{\Omega}^{Y,s}$ by fixing the base domain $Y$. For two different base domains, the comparison result is given below.

\begin{theorem}\label{p2thm12}
Let $Y_1,Y_2\subsetneq\mathbb{C}$ and $\Omega\subset\mathbb{C}$ be subdomains. If for every point $b\in Y_2$ there exists a point $a\in Y_1$ 
and a holomorphic covering map $g_b:Y_1\setminus\{a\}\to Y_2\setminus\{b\}$ which extends to the holomorphic function  with $g_b(a)=b$ and $g'_b(a)\neq 0$, then
$$
\mathscr{C}_{\Omega}^{Y_1,a}(w)\leq\mathscr{C}_{\Omega}^{Y_2,b}(w)
$$
for all $w\in\Omega$.
\end{theorem}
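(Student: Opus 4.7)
The approach mirrors that of Proposition~\ref{p2prop5}: we want to use the covering maps $g_b$ to push forward extremal candidates for $\mathscr{C}_\Omega^{Y_1,a}$ into candidates for $\mathscr{C}_\Omega^{Y_2,b}$, and then use the equality case of the distance-decreasing property for Hurwitz covering maps to ensure no loss in the density.

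First, I would fix $w\in\Omega$, fix $b\in Y_2$, and let $a\in Y_1$ with the covering map $g_b:Y_1\setminus\{a\}\to Y_2\setminus\{b\}$ (extended holomorphically with $g_b(a)=b$, $g_b'(a)\neq 0$) as provided by the hypothesis. The plan is to take an arbitrary $h\in\mathcal{H}^a_w(\Omega,Y_1)$ and form the composition $g_b\circ h:\Omega\to Y_2$. Clearly $(g_b\circ h)(w)=g_b(a)=b$, and I must check the injectivity-at-$w$ condition: for $z\in\Omega\setminus\{w\}$ we have $h(z)\in Y_1\setminus\{a\}$, and since $g_b$ maps $Y_1\setminus\{a\}$ into $Y_2\setminus\{b\}$, it follows that $(g_b\circ h)(z)\neq b$. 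Hence $g_b\circ h\in\mathcal{H}^b_w(\Omega,Y_2)$, so it is an admissible competitor in the definition of $\mathscr{C}_\Omega^{Y_2,b}(w)$.

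Next, plugging $g_b\circ h$ into Definition~\ref{p2def3.7} and applying the chain rule yields
$$
\mathscr{C}_{\Omega}^{Y_2,b}(w)\ge \eta_{Y_2}((g_b\circ h)(w))\,|(g_b\circ h)'(w)|
=\eta_{Y_2}(b)\,|g_b'(a)|\,|h'(w)|.
$$
Now the crucial identity: since $g_b$ is (an extension of) a Hurwitz covering map from $Y_1$ onto $Y_2$ with $g_b(a)=b$, the equality case of the distance-decreasing property \cite[Theorem~6.1]{Min16} (used previously in the proof of Proposition~\ref{p2prop5}) gives
$$
\eta_{Y_2}(b)\,|g_b'(a)|=\eta_{Y_1}(a).
$$
Substituting this in and noting that $h(w)=a$, I obtain
$$
\mathscr{C}_{\Omega}^{Y_2,b}(w)\ge \eta_{Y_1}(h(w))\,|h'(w)|.
$$

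Finally, since $h\in\mathcal{H}^a_w(\Omega,Y_1)$ was arbitrary, taking the supremum on the right side over all such $h$ produces precisely $\mathscr{C}_{\Omega}^{Y_1,a}(w)$ by Definition~\ref{p2def3.7}, which yields the stated inequality. The one subtle point is the verification that $g_b\circ h$ actually lies in $\mathcal{H}^b_w(\Omega,Y_2)$ rather than merely in $\mathcal{H}(\Omega,Y_2)$; this hinges on the fact that the extended covering map $g_b$ takes the single pre-image $a$ to $b$ and no other point, which is built into the hypothesis. Once this is verified, the remainder is a routine composition-plus-supremum argument, and no deep obstacle should arise.
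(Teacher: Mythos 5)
Your proposal is correct and follows essentially the same route as the paper: compose an admissible $h\in\mathcal{H}^a_w(\Omega,Y_1)$ with the extended covering map $g_b$, verify that $g_b\circ h\in\mathcal{H}^b_w(\Omega,Y_2)$, and invoke the equality $\eta_{Y_2}(g_b(a))|g_b'(a)|=\eta_{Y_1}(a)$ from the covering-map case of Minda's distance-decreasing theorem. The only cosmetic difference is that you take the supremum over all competitors directly while the paper runs an $\epsilon$-near-extremal argument (and explicitly disposes of the case $\mathcal{H}^a_w(\Omega,Y_1)=\emptyset$, which your supremum convention handles implicitly); these are equivalent.
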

\begin{proof}
By the distance decreasing property for Hurwitz density, it follows that
\begin{equation}{\label{p2eq6}}
\eta_{Y_2}(g_b(a))|g'_b(a)|=\eta_{Y_1}(a)
\end{equation}
since $g_b$ is the extended holomorphic covering map from $Y_1$ onto $Y_2$.
Let $\epsilon>0$ be arbitrary. 

If $\mathcal{H}_w^a(\Omega,Y_1)=\emptyset$, then $\mathscr{C}_{\Omega}^{Y_1,a}=0$ and hence there is nothing to prove. Therefore, without loss of generality we assume that $\mathcal{H}_w^a(\Omega,Y_1)\neq \emptyset$.

By the definition of $\mathscr{C}_{\Omega}^{Y_1,a}$, for $a\in Y_1$ and
$w\in\Omega$, there exists a function $h\in\mathcal{H}_w^a(\Omega,Y_1)$ such that
\begin{equation}\label{p2eq7}
\mathscr{C}_{\Omega}^{Y_1,a}(w)\leq\eta_{Y_1}(h(w))|h'(w)|+\epsilon.
\end{equation}
Now we notice that the composed function $g_b\circ h\in\mathcal{H}(\Omega,Y_2)$ satisfies $(g_b\circ h)(w)=b$, $(g_b\circ h)(z)\neq b$ for all $z\in\Omega\setminus\{w\}$. Hence, $g_b\circ h\in\mathcal{H}_w^b(\Omega,Y_2)$. Applying $g_b\circ h$ in the definition of $\mathscr{C}_{\Omega}^{Y_2,b}(w)$, we conclude that
\begin{equation}\label{p2eq8}
\mathscr{C}_{\Omega}^{Y_2,b}(w)\geq\eta_{Y_2}((g_b\circ h)(w))|(g_b\circ h)'(w)|
=\eta_{Y_2}(g_b(a))|g_b'(a)||h'(w)|
\end{equation} 
for all $w\in\Omega$. Combining \eqref{p2eq6}, \eqref{p2eq7}, \eqref{p2eq8} and applying the chain rule, we obtain
$$
\mathscr{C}_{\Omega}^{Y_1,a}(w)\leq\eta_{Y_1}(a)|h'(w)|+\epsilon=\eta_{Y_2}(g_b(a))|g'_b(a)||h'(w)|+\epsilon\leq\mathscr{C}_{\Omega}^{Y_2,b}(w)+\epsilon
$$
for all $w\in\Omega$.
Since $\epsilon$ is arbitrary, we can let it approach to zero to obtain the desired inequality.
\end{proof}

\begin{corollary}
If $Y_1$ and $Y_2$ are conformally equivalent proper subdomains of $\mathbb{C}$ and $\Omega$ is an arbitrary subdomain of $\mathbb{C}$, then
$$
\mathscr{C}_{\Omega}^{Y_1,a}(w)=\mathscr{C}_{\Omega}^{Y_2,b}(w)
$$
holds for every $w\in\Omega$ and for some $a\in Y_1, b\in Y_2$.
\end{corollary}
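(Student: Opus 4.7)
The plan is to obtain this identity as a direct two-sided application of Theorem~\ref{p2thm12}. Let $\varphi:Y_1\to Y_2$ be a conformal equivalence, fix any $a\in Y_1$, and put $b=\varphi(a)\in Y_2$. These are the points that will witness the corollary.

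First I would verify that $\varphi$ satisfies the hypothesis of Theorem~\ref{p2thm12}. Since $\varphi$ is a biholomorphism, its restriction to $Y_1\setminus\{a\}$ is a conformal equivalence onto $Y_2\setminus\{b\}$, and in particular a holomorphic covering map. Moreover, this restriction extends to the full map $\varphi:Y_1\to Y_2$ with $\varphi(a)=b$ and $\varphi'(a)\neq 0$ (the latter because $\varphi$ is conformal at $a$). Thus Theorem~\ref{p2thm12}, with $g_b:=\varphi$, yields
$$
\mathscr{C}_{\Omega}^{Y_1,a}(w)\leq\mathscr{C}_{\Omega}^{Y_2,b}(w)\qquad\text{for all }w\in\Omega.
$$

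For the reverse inequality I would repeat the argument with the inverse map. Because $\varphi^{-1}:Y_2\to Y_1$ is itself a conformal equivalence sending $b$ back to $a$, the same verification shows that $g_a:=\varphi^{-1}$ fulfils the hypothesis of Theorem~\ref{p2thm12} with the roles of $Y_1$ and $Y_2$ interchanged. Applying the theorem in this reversed form gives $\mathscr{C}_{\Omega}^{Y_2,b}(w)\leq\mathscr{C}_{\Omega}^{Y_1,a}(w)$, and combining the two inequalities produces the asserted equality.

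There is no real obstacle here: the only thing to check is that a conformal equivalence automatically meets the covering/extension hypothesis of Theorem~\ref{p2thm12}, which is immediate since biholomorphisms are covering maps with nonvanishing derivative. The corollary is therefore essentially a symmetry statement dressed up in the language of Theorem~\ref{p2thm12}.
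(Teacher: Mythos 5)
Your proposal is correct and follows the paper's own argument: the paper likewise applies Theorem~\ref{p2thm12} once with the conformal map to get $\mathscr{C}_{\Omega}^{Y_1,a}(w)\le\mathscr{C}_{\Omega}^{Y_2,b}(w)$ and once with its inverse for the reverse inequality. Your write-up merely makes explicit the routine verification that a biholomorphism (restricted off the distinguished point) meets the covering/extension hypothesis, which the paper leaves implicit.
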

\begin{proof}
We consider the inverse image of the conformal mapping in Theorem~\ref{p2thm12} to obtain the reverse inequality $
\mathscr{C}_{\Omega}^{Y_1,a}(w)\ge \mathscr{C}_{\Omega}^{Y_2,b}(w)
$.
\end{proof}


\section{A distance function}
\label{p2sec4}

In this section, we consider the usual distance function associated with the Carath\'eodory density of the Hurwitz metric $\mathscr{C}_\Omega^{Y,s}$ for the domains $Y\subsetneq\mathbb{C}$ and $\Omega\subset\mathbb{C}$.

\begin{definition}\label{p2def4.1}
Let $Y\subsetneq\mathbb{C}$ and $\Omega\subset\mathbb{C}$ be domains. For $w_1,w_2\in\Omega$ and $s\in Y$ define
$$
\mathscr{C}_{\Omega}^Y(w_1,w_2)=\inf\int_{\gamma}\mathscr{C}_{\Omega}^{Y,s}(w)|dw|,
$$
where the infimum is taken over all rectifiable paths $\gamma\subset\Omega$ joining $w_1$ to $w_2$. If $\mathscr{C}_{\Omega}^Y$ defines a metric, then we say $(\Omega,\mathscr{C}_{\Omega}^Y)$ a metric space.
\end{definition}


It is easy to see from Definition~\ref{p2def4.1} that $\mathscr{C}_{\Omega}^Y(w_1,w_1)=0$ and $\mathscr{C}_{\Omega}^Y(w_1,w_2)=\mathscr{C}_{\Omega}^Y(w_2,w_1)$ for any $w_1,w_2\in\Omega.$ Further, it can also be verified that $\mathscr{C}_{\Omega}^Y$ satisfies the triangle inequality. Hence, at least we can say that $\mathscr{C}_{\Omega}^Y$ is a pseudo-metric. At present we do not know whether
$\mathscr{C}_{\Omega}^Y$ defines a metric or not. However, we have a partial solution to this whenever $\Omega\subset Y$.
 
\begin{theorem}\label{main-thm5}
If $\Omega\subset Y\subsetneq\mathbb{C}$ are domains, then $(\mathscr{C}_{\Omega}^Y,\Omega)$ becomes a metric space.
\end{theorem}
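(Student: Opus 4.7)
The paragraph preceding the theorem already verifies reflexivity, symmetry, and the triangle inequality for $\mathscr{C}_\Omega^Y$, so my task is to establish the single missing axiom for a metric, namely strict positivity: $\mathscr{C}_\Omega^Y(w_1, w_2) > 0$ whenever $w_1 \neq w_2$. My plan is to produce a pointwise positive lower bound on the integrand in Definition~\ref{p2def4.1} and then promote it to a lower bound on the integral by comparison with an already-known metric on $Y$.

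The crucial observation is that the hypothesis $\Omega \subset Y$ supplies a natural candidate for the supremum defining $\mathscr{C}_\Omega^{Y,s}(w)$, namely the inclusion $i : \Omega \to Y$ given by $i(z) = z$. This is holomorphic and injective, so for every $w \in \Omega$ one has $i^{-1}(\{w\}) = \{w\}$, i.e.\ $i \in \mathcal{H}^w_w(\Omega, Y)$ with the base point $s = w$. Plugging $i$ into Definition~\ref{p2def3.7} yields the pointwise estimate
$$
\mathscr{C}_\Omega^{Y,w}(w) \;\geq\; \eta_Y(i(w))\,|i'(w)| \;=\; \eta_Y(w) \;>\; 0,
$$
the positivity on the right following from $Y$ being a proper (hyperbolic) subdomain of $\mathbb{C}$. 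For any rectifiable path $\gamma \subset \Omega \subset Y$ joining $w_1$ to $w_2$, this pointwise bound integrates to
$$
\int_\gamma \mathscr{C}_\Omega^{Y,s}(w)\,|dw| \;\geq\; \int_\gamma \eta_Y(w)\,|dw| \;\geq\; d_{\eta_Y}(w_1, w_2),
$$
where $d_{\eta_Y}$ is the distance on $Y$ induced by the Hurwitz density. Since $\eta_Y$ dominates the hyperbolic density $\lambda_Y$ (see \cite{Min16}) and the hyperbolic distance separates points on any hyperbolic domain, $d_{\eta_Y}(w_1, w_2) > 0$. Taking the infimum over all admissible $\gamma$ gives $\mathscr{C}_\Omega^Y(w_1, w_2) \geq d_{\eta_Y}(w_1, w_2) > 0$, as desired.

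The principal obstacle is not computational but interpretational: one must recognise that the containment $\Omega \subset Y$ is precisely what allows the inclusion to serve as a legitimate test function, producing the \emph{uniform} positive integrand $\eta_Y$ that would otherwise be unavailable for an abstract pair of domains. A minor point is that the base point $s$ in Definition~\ref{p2def4.1} is written as if fixed, whereas the inclusion naturally wants $s=w$; this is resolved in the spirit of the identification $\mathscr{C}_\Omega^{\Omega,w}(w) \equiv \eta_\Omega(w)$ already used in Proposition~\ref{p2prop5}, under which $s$ tacitly varies with $w$ along the path, and the inclusion argument goes through verbatim.
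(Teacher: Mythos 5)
Your argument is essentially the paper's own proof: both reduce the problem to strict positivity of the pseudo-metric, plug the inclusion $i\in\mathcal{H}^w_w(\Omega,Y)$ into Definition~\ref{p2def3.7} to get the pointwise bound $\mathscr{C}_{\Omega}^{Y,w}(w)\geq\eta_Y(w)$, integrate this along an arbitrary rectifiable $\gamma\subset\Omega\subset Y$, and conclude $\mathscr{C}_{\Omega}^Y(w_1,w_2)\geq\eta_Y(w_1,w_2)>0$; you even flag the same base-point subtlety (taking $s=w$ varying along $\gamma$) that the paper passes over silently. The single point of divergence is the justification of the last inequality: the paper cites \cite[Theorem~2.3]{AS1} for positivity of the Hurwitz distance, whereas you derive it from $\eta_Y\geq\lambda_Y$ together with the fact that the hyperbolic distance separates points. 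That derivation tacitly assumes $Y$ is hyperbolic, i.e., that $\mathbb{C}\setminus Y$ contains at least two points, but the theorem's hypothesis gives only $Y\subsetneq\mathbb{C}$: if $Y$ is a punctured plane $\mathbb{C}\setminus\{p\}$, then $\lambda_Y$ does not exist and your comparison is vacuous (your parenthetical ``proper (hyperbolic) subdomain'' is asserting something not granted). The gap is small and easily repaired without the hyperbolic metric: $\eta_Y$ is positive and continuous on every proper subdomain $Y$ (see \cite{Min16}), so it has a positive minimum $m$ on a small closed disk of radius $r$ centered at $w_1$ and avoiding $w_2$, and every path $\gamma$ from $w_1$ to $w_2$ crosses this disk, giving $\int_{\gamma}\eta_Y(w)\,|dw|\geq mr>0$; this is in substance what the paper's citation of \cite[Theorem~2.3]{AS1} supplies in one stroke.
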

\begin{proof}
Since $\mathscr{C}_{\Omega}^Y$ is a pseudo-metric on ${\Omega}$, it is enough to show that $\mathscr{C}_{\Omega}^Y(w_1,w_2)>0$ for two distinct points $w_1,w_2\in\Omega$.
Let $\gamma$ be an arbitrary rectifiable curve joining $w_1$ to $w_2$ in $\Omega$.
Since $\Omega\subset Y,$ plugging the inclusion mapping $i\in\mathcal{H}^w_w(\Omega,Y)$ into the definition of $\mathscr{C}_{\Omega}^{Y,w}(w),$ we conclude that 
$$ 
\int_{\gamma}\mathscr{C}_{\Omega}^{Y,w}(w)|dw|
\geq\int_{\gamma}\eta_Y(i(w))|i'(w)||dw|
=\int_{\gamma}\eta_Y(w)|dw|
$$
By the definition of Hurwitz distance (see \cite{AS1}) between two points, it follows that
$$
\int_{\gamma}\mathscr{C}_{\Omega}^{Y,w}(w)|dw|>\eta_Y(w_1,w_2).
$$
Now, taking infimum over $\gamma$, we obtain
$$
\mathscr{C}_{\Omega}^Y(w_1,w_2)\geq\eta_Y(w_1,w_2)>0,
$$
where the last inequality follows from \cite[Theorem~2.3]{AS1}.
Hence $(\Omega,\mathscr{C}_{\Omega}^Y)$ defines a metric space, completing the proof.
\end{proof}

\medskip
\noindent 
{\bf Acknowledgement.}
The authors would like to thank the referee for his/her careful reading of the manuscript and useful remarks. 
The research work of Arstu is 
supported by CSIR-UGC $($Grant No: 21/06/2015(i)EU-V$)$ and
of S. K. Sahoo is partially 
supported by NBHM, DAE $($Grant No: $2/48 (12)/2016/${\rm NBHM (R.P.)/R \& D II}/$13613)$.

\medskip

\end{document}